\theoremstyle{theorem}
\newtheorem{thrm}{Theorem}
\newtheorem{lem}[thrm]{Lemma}
\newtheorem{cor}[thrm]{Corollary}
\newtheorem{prop}[thrm]{Proposition}
\numberwithin{thrm}{section} 
\def \Dj{\mbox{\raise0.3   ex\hbox{-}\kern-0.4em D}}
\theoremstyle{remark}
\newtheorem{rem}[thrm]{Remark}
\def\R{\mathbb{R}}
\def\Z{\mathbb{Z}}
\def\C{\mathbb{C}}
\begin{document}

\title[Harmonic functions with highly intersecting zero sets]{Harmonic functions with highly intersecting zero sets}

\author[Vuka\v sin Stojisavljevi\'c ]{Vuka\v sin Stojisavljevi\'c}
\address{D\'e\-par\-te\-ment de math\'ematiques et de
sta\-tistique, Univer\-sit\'e de Mont\-r\'eal,  CP 6128 succ
Centre-Ville, Mont\-r\'eal,  QC  H3C 3J7, Canada.}
\email{vukasin.stojisavljevic@gmail.com}
\maketitle

\begin{abstract}
We show that the number of isolated zeros of a harmonic map $h:\R^2\to \R^2$ inside the ball of radius $r$ can grow arbitrarily fast with $r$, while its maximal modulus grows in a controlled manner. This result is an analogue, in the context of harmonic maps, of the celebrated Cornalba-Shiffman counterexamples to the transcendental B\'ezout problem.
\end{abstract}


\section{Introduction}

\subsection{The results}

For $r\geq 0$, let $B_r\subset \R^d$ be the closed ball of radius $r.$ Given $h:\R^d \to \R^d,$ we denote by $\mu(h,r)=\max_{ B_r} |h|$ and by  $\zeta(h,r)$ the number of connected components of $h^{-1}(0) \cap B_r.$ It readily follows that
$$\text{ the number of isolated zeros of } h \text{ in } B_r \leq \zeta(h,r).$$
We use the same notation for complex spaces and $f:\C^d\to \C^d$, by identifying $\C^d\cong\R^{2d}.$

Recall that $h=(h_1,h_2):\R^2 \to \R^2$ is called \textit{harmonic} if both $h_1$ and $h_2$ are harmonic.  Note that $h^{-1}(0)=h_1^{-1}(0)\cap h_2^{-1}(0)$, i.e. the zeros of $h$ correspond to the intersections of zero sets of $h_1$ and $h_2.$ The following is the main result of the paper:

\begin{thrm}\label{Thm:Main} Let $\varepsilon>0$ and let $\{n_i\}_{i\geq 1}$ be any sequence of positive integers. There exists a harmonic map $h:\R^2\to \R^2$, such that $\mu(h,r)=O(e^{r^{1+\varepsilon}})$ as $r\to \infty$, and for all integers $k\geq 1$, $h$ has at least $\sum_{i=1}^k n_i$ isolated zeros inside $B_{2^k}.$ In particular, $\zeta(h,r)$ can grow arbitrarily fast with $r$,  for a suitable choice of $\{n_i\}.$
\end{thrm}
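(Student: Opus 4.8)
The plan is to establish a harmonic analogue of the Cornalba–Shiffman construction, and the first move is to pass to holomorphic data. Since $\R^2\cong\C$ is simply connected, $h=(h_1,h_2)=(\operatorname{Re}F,\operatorname{Re}G)$ for entire functions $F,G$, and $h^{-1}(0)$ is exactly the intersection of the nodal sets $\{\operatorname{Re}F=0\}$ and $\{\operatorname{Re}G=0\}$. By the Borel--Carathéodory inequality, $\max_{B_r}|F|\lesssim \max_{B_{2r}}|\operatorname{Re}F|+|F(0)|$ and similarly for $G$, so the growth requirement $\mu(h,r)=O(e^{r^{1+\varepsilon}})$ is guaranteed once $F$ and $G$ are produced with $\log\max_{B_r}|F|,\ \log\max_{B_r}|G|=O(r^{1+\varepsilon})$. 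Thus it suffices to construct two entire functions of this growth whose nodal curves cross at least $\sum_{i=1}^{k}n_i$ times inside $B_{2^k}$ (transversally, so that the crossings are isolated zeros of $h$), and then to sum over $k$.

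For the construction itself I would fix a sequence of ``bursting centres'' $c_i$ with $|c_i|$ comfortably inside $B_{2^i}$, together with a very rapidly increasing sequence of frequency parameters $\omega_i$ and a complementary sequence of decay weights $b_i$, and realize $F$ (and, if needed, also $G$) as a rapidly convergent series of elementary building blocks --- schematically $F=\sum_i b_i\,e^{i\omega_i z}$ with minuscule $|b_i|$ --- arranged so that on a small disc $D_i\ni c_i$ with $D_i\subset B_{2^i}$ the $i$-th term dominates all the others. Then on $D_i$ the nodal set $\{\operatorname{Re}F=0\}$ consists of roughly $n_i$ nearly parallel arcs, which one makes transverse to a fixed simple nodal curve of $\{\operatorname{Re}G=0\}$ (chosen, e.g.\ as a single line, whose direction and position are picked so that it passes appropriately near every $c_i$); counting the crossings inside $D_i$ gives at least $n_i$ isolated zeros of $h$ there. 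Since the $D_i$ are pairwise disjoint, summing over $i\le k$ yields at least $\sum_{i\le k}n_i$ isolated zeros in $B_{2^k}$; a final generic perturbation, if necessary, makes every one of them simple. Taking $\{n_i\}$ to grow faster than any prescribed function of $2^i$ then forces $\zeta(h,r)$ to grow arbitrarily fast.

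The technical heart of the argument --- and the step I expect to be the real obstacle --- is the simultaneous control of two opposing demands. On one hand, the weights $b_i$ must decay so fast that $F=\sum_i b_i e^{i\omega_i z}$ is entire and, at radius $r\approx 2^k$, dominated (in modulus) essentially by its $k$-th term, which one arranges to be $\le e^{r^{1+\varepsilon}}$; on the other hand, on $D_i$ the $i$-th term must still overwhelm the whole tail $\sum_{j\ne i}b_j e^{i\omega_j z}$, so that the $n_i$-fold oscillation of $\operatorname{Re}F$ there is not destroyed. Reconciling these requires a careful, order-dependent choice of the scales $|c_i|$, the frequencies $\omega_i$, the radii of the $D_i$, and the weights $b_i$ --- and in particular a judicious placement of the $c_i$ and of the $G$-nodal curve so that the tail is genuinely negligible on each $D_i$ --- together with a Rouché-type / degree-theoretic argument on $\partial D_i$ to certify that the $n_i$ transverse zeros persist under both the tail perturbation and the concluding genericity perturbation.
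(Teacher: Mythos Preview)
Your overall architecture is the same as the paper's: take $h=(\operatorname{Re}F,\operatorname{Re}G)$, choose $G$ so that $\{\operatorname{Re}G=0\}$ is a fixed family of lines, and build $F$ as a rapidly convergent series of elementary harmonic blocks, the $k$-th of which is responsible for the zeros at scale $2^k$. The gap is in the choice of blocks. With single exponentials $b_ie^{i\omega_i z}$, getting $n_i$ oscillations of $\operatorname{Re}(b_ie^{i\omega_i z})$ along any line segment inside $B_{2^i}$ forces $\omega_i\gtrsim n_i/2^i$. For the growth bound $\sum_i|b_i|e^{\omega_i r}=O(e^{r^{1+\varepsilon}})$ to hold for \emph{all} $r$ one then needs $\log|b_i|\lesssim -c_\varepsilon\,\omega_i^{1+1/\varepsilon}$ (minimise $r^{1+\varepsilon}-\omega_i r$). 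But on $D_i\subset B_{2^i}$ the modulus of the $i$-th term is at most $|b_i|e^{\omega_i 2^i}$, and asking this to dominate the already-fixed earlier terms forces $\omega_i 2^i\gtrsim \omega_i^{1+1/\varepsilon}$, i.e.\ $\omega_i\lesssim(2^i)^{\varepsilon}$, hence $n_i\lesssim (2^i)^{1+\varepsilon}$. Since the $n_i$ are supposed to be arbitrary, the ``$i$-th term dominates on $D_i$'' step---which you correctly flag as the crux---cannot be carried out in general with this ansatz, and no Rouch\'e argument on $\partial D_i$ will rescue it.

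The paper's remedy is to replace the single exponential by a block
\[
u_k(x,y)=\sum_{j=1}^{2c_k-1}b_{k,j}\sin\!\Big(\tfrac{\pi}{2^k}jx\Big)e^{\tfrac{\pi}{2^k}jy},
\]
a degree-$(2c_k-1)$ polynomial in $e^{i\pi z/2^k}$ whose coefficients are rigged so that on the line $\{x=2^{k-1}\}$ it has exactly $c_k-1$ simple zeros. The decisive observation is that $u_k$ vanishes \emph{identically} on every line $\{x=2^l\}$ with $l\ge k$ (because $\sin(\pi j 2^{l-k})=0$). Hence on $\{x=2^{k-1}\}$ the whole partial sum $\sum_{j<k}a_ju_j$ is exactly zero: there is nothing to dominate, and the $c_k-1$ zeros of $a_ku_k$ persist under the tail $\sum_{j>k}a_ju_j$ no matter how minute $a_k$ itself had to be for the growth bound. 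Pairing this with $\operatorname{Re}G=\sin(\pi x)e^{\pi y}$ (nodal set $\{x\in\Z\}$) gives the result. This exact-vanishing-on-dyadic-lines trick is the missing idea; once you have it, the tension you identify between growth control and domination disappears entirely.
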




By considering the holomorphic extension of $h$ to $\C^2$, we obtain the following corollary of Theorem \ref{Thm:Main}. 

\begin{cor}\label{Coro:Holomorphic}
Let $\varepsilon>0$ and let $\{n_i\}_{i\geq 1}$ be any sequence of positive integers. There exists a holomorphic map $f:\C^2\to \C^2$, such that $\mu(f,r)=O(e^{r^{1+\varepsilon}})$ as $r\to \infty$, and for all integers $k\geq 1$, $f$ has at least $\sum_{i=1}^k n_i$ isolated zeros inside $B_{2^k}.$ In particular, $\zeta(f,r)$ can grow arbitrarily fast with $r$,  for a suitable choice of $\{n_i\}.$
\end{cor}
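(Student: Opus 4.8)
The plan is to derive Corollary \ref{Coro:Holomorphic} from Theorem \ref{Thm:Main} by complexifying the harmonic map. Since $\R^2\cong\C$ is simply connected, each coordinate $h_j$ of $h=(h_1,h_2)$ is the real part of an entire function $F_j$, so $h_j=\operatorname{Re}F_j$. Writing a point of $\C^2$ as $(x,y)$ with $x,y\in\C$ and introducing the independent complex coordinates $z=x+iy$, $w=x-iy$, I would set $f_j(x,y)=\tfrac12\bigl(F_j(z)+\bar F_j(w)\bigr)$, where $\bar F_j$ is the entire function obtained by conjugating the Taylor coefficients of $F_j$. Each $f_j$ is holomorphic on $\C^2$, and on the totally real locus $\R^2=\{w=\bar z\}$ the identity $\bar F_j(\bar z)=\overline{F_j(z)}$ gives $f_j=h_j$. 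Thus $f:=(f_1,f_2)\colon\C^2\to\C^2$ is the desired holomorphic extension of $h$.

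For the growth bound, I would first apply Theorem \ref{Thm:Main} with $\varepsilon/2$ in place of $\varepsilon$ (keeping the given sequence $\{n_i\}$), obtaining $h$ with $\mu(h,r)=O(e^{r^{1+\varepsilon/2}})$. By the Borel--Carath\'eodory inequality, $\max_{|z|\le\rho}|F_j(z)|\le 2\max_{|z|\le 2\rho}\operatorname{Re}F_j(z)+O(1)\le 2\,\mu(h,2\rho)+O(1)=O\bigl(e^{(2\rho)^{1+\varepsilon/2}}\bigr)$, and the same estimate holds for $\bar F_j$ since conjugating coefficients preserves moduli on circles. Since $(x,y)\in B_r\subset\C^2$ forces $|z|,|w|\le\sqrt2\,r$, this yields $\mu(f,r)=O\bigl(e^{(2\sqrt2\,r)^{1+\varepsilon/2}}\bigr)$, and because $(2\sqrt2\,r)^{1+\varepsilon/2}=o(r^{1+\varepsilon})$ as $r\to\infty$, we get $\mu(f,r)=O(e^{r^{1+\varepsilon}})$.

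For the zeros, note that $\R^2\hookrightarrow\C^2$ is an isometric embedding, so the (at least) $\sum_{i=1}^k n_i$ isolated zeros of $h$ in $B_{2^k}\subset\R^2$ furnished by Theorem \ref{Thm:Main} lie in $B_{2^k}\subset\C^2$ and are zeros of $f$. The remaining point is to upgrade each such point $p$ to an \emph{isolated} zero of $f$ in $\C^2$. I expect the zeros produced in the proof of Theorem \ref{Thm:Main} to be non-degenerate (arising as transverse intersections of nodal sets), so that $df_p=(dh_p)_{\C}$ is invertible, $f$ is a local biholomorphism near $p$, and $p$ is an isolated zero of $f$. Failing that, one can argue from complex analytic geometry: $f^{-1}(0)$ is an analytic subset of $\C^2$, and the special ``split'' form $f_j(z,w)=\tfrac12(F_j(z)+\bar F_j(w))$ should force any positive-dimensional component of $f^{-1}(0)$ through a real point to meet $\R^2$ in a real-analytic arc, contradicting the isolatedness of $p$ in $h^{-1}(0)=f^{-1}(0)\cap\R^2$. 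Either way, $\zeta(f,2^k)\ge\sum_{i=1}^k n_i$ for every $k$, and the statement about arbitrarily fast growth of $\zeta(f,r)$ follows from the corresponding assertion in Theorem \ref{Thm:Main}.

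The hard part is this last step — promoting real isolated zeros of $h$ to isolated zeros of the complexification $f$ — since a degenerate zero of a real-analytic map need not remain isolated after complexification; resolving it requires either non-degeneracy coming from the construction in Theorem \ref{Thm:Main} or exploiting the particular shape of $f$. By contrast, the growth estimate is routine, provided one remembers to spend a little room in the exponent (working with $\varepsilon/2$) to absorb the constants coming from Borel--Carath\'eodory and from $|z|\le\sqrt2\,r$ on $B_r$.
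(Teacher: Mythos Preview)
Your approach is essentially the paper's: apply Theorem~\ref{Thm:Main} with $\varepsilon/2$, holomorphically extend $h$ to $\C^2$, control the growth of the extension, and use non-degeneracy of the constructed zeros to keep them isolated in $\C^2$. The only cosmetic difference is that the paper packages the extension and the bound $\mu(f,r)\le C\,\mu(h,2r)$ as a single classical result (citing Hayman) rather than writing out $f_j=\tfrac12\bigl(F_j(z)+\bar F_j(w)\bigr)$ and invoking Borel--Carath\'eodory as you do. On the isolation step your primary expectation is exactly what the paper uses: Remark~\ref{rem:regular_zeros} records that the zeros produced in Theorem~\ref{Thm:Main} are regular points of $h$, so $df_p$ (which agrees with $dh_p$ on $\R^2$) is invertible and each such zero is isolated in $\C^2$; your fallback analytic-geometry argument is unnecessary and, as you phrased it, not quite airtight.
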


\begin{rem}
Theorem \ref{Thm:Main} and Corollary \ref{Coro:Holomorphic} readily extend to higher dimensions. Indeed, let $d\geq 3$ and denote the coordinates on $\R^d$ by $(x_1,\ldots, x_d).$ We have that $\tilde{h}:\R^d\to \R^d$, given by $\tilde{h}(x_1,\ldots,x_d)=(h(x_1,x_2), x_3,\ldots,x_d)$, has the same properties as $h.$ The same holds for $\tilde{f}:\C^d \to \C^d, \tilde{f}(z_1,\ldots,z_d)=(f(z_1,z_2), z_3,\ldots,z_d).$
\end{rem}

The inspiration for Theorem \ref{Thm:Main} comes from the examples of Cornalba and Shiffman - \cite{CornalbaShiffman}. These examples were used to disprove a prediction about an upper bound on $\zeta(f,r)$ in terms of $\mu(f,r)$ when $f$ is holomorphic. Corollary \ref{Coro:Holomorphic} gives another set of counterexamples to this prediction.  In this regard, Theorem \ref{Thm:Main} can be considered as an analogue of Cornalba-Shiffman examples in the realm of harmonic maps.  We now provide some background about these examples and put Theorem \ref{Thm:Main} and Corollary \ref{Coro:Holomorphic}  in broader context.

\subsection{Context} 

Given a holomorphic map $f:\C^d\to \C^d$, the idea of bounding $\zeta(f,r)$ in terms of $\mu(f,r)$ has been proposed by Griffiths in the 1970s under the name \textit{transcendental B\'ezout problem}, see \cite{Griffiths71}. Philosophical roots of this idea go back to Serre’s famous G.A.G.A. program - \cite{SerreGAGA}. For holomorphic maps, $\log \mu(f,r)$ plays the role of the degree of $f.$ In the case $d=1$, a bound on $\zeta(f,r)$ in terms of $\log \mu(f,r)$ is given by the Jensen's formula. The existence of a similar bound in higher dimensions was predicted as a part of the transcendental B\'ezout problem. This prediction was disproved by Cornalba and Shiffman in \cite{CornalbaShiffman}.  They constructed, for every $\varepsilon>0$, a holomorphic map $f:\C^2 \to \C^2$ such that $\mu(f,r)= O(e^{r^{\varepsilon}})$, while $\zeta (f,r)$ can grow arbitrarily fast. Corollary \ref{Coro:Holomorphic} gives another set of counterexamples to the conjectured transcendental B\'ezout bound.

In the opposite direction, positive results have been obtained for certain modifications of the original transcendental B\'ezout problem, see \cite{Stoll72,Carlson76,Gruman77,BY86, Ji95, LT96, Li98} and references therein. Most recently, a \textit{coarse viewpoint} on the problem, inspired by topological persistence, has been introduced in \cite{BPPSS23}.  From this point of view,  holomorphic and harmonic maps are considered on equal footing. In the case of holomorphic maps, the results of \cite{BPPSS23} are juxtaposed with the Cornalba-Shiffman examples or Corollary \ref{Coro:Holomorphic} and in the case of harmonic maps with Theorem \ref{Thm:Main}. We briefly recall the relevant results from \cite{BPPSS23}.

Let $F:\R^d \to \R^d$ be an arbitrary map. For $\delta, r \geq 0$,  \textit{the coarse count of zeros of} $F$, denoted by $\zeta(F,r,\delta)$, is the number of connected components of $\{ |F| \leq \delta \} \cap B_r$ which contain zeros of $F.$ Note that $\zeta(F,r,0)=\zeta(F,r).$ The same definition extends to complex spaces via $\C^d\cong \R^{2d}.$

\begin{thrm}[\cite{BPPSS23}]\label{Theorem_coarse_counts}
Let $a>1.$ For any holomorphic map $f: \C^d \to \C^d$,  $r>0$ and $\delta \in \left( 0, \frac{\mu(f, ar)}{e} \right)$,  it holds
\begin{equation}
\label{eq:coarse_holomorphic}
\zeta(f,r,\delta) \leq C_0 \left(\log\left(\frac{\mu(f, ar)}{\delta}\right)\right)^{2d-1},
\end{equation}
where the constant $C_0$ depends only on $a$ and $d$. 

On the other hand, for any harmonic map $h:\R^d \to \R^d$ there exists $C_1 \geq 1$, which depends only on $d$, such that for $r >0$ and $\delta \in \left( 0, \frac{\mu(h,C_1 ar)}{e} \right)$, it holds
\begin{equation}\label{eq:coarse_harmonic} 
\zeta(h,r,\delta) \leq C_2 \left(\log\left(\frac{\mu(h, C_1 ar)}{\delta}\right)\right)^{d}, 
\end{equation}
where $C_2$ depends only on $a$ and $d.$
\end{thrm}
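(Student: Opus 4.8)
The plan is to prove both \eqref{eq:coarse_holomorphic} and \eqref{eq:coarse_harmonic} by the same scheme, resting on the fact that $|F|$ is subharmonic on the underlying Euclidean space — indeed $|F|=\sup_{|\xi|=1}|\langle F,\xi\rangle|$ is a supremum of subharmonic functions, and it is even plurisubharmonic when $F=f$ is holomorphic — so that $\{|F|\le\delta\}$ is a sublevel set of a subharmonic function. Throughout fix $R=ar$ (holomorphic case) or $R=C_1ar$ (harmonic case), write $L=\log(\mu(F,R)/\delta)$, and note $L>1$ since $\delta<\mu(F,R)/e$. A connected component of $\{|F|\le\delta\}\cap B_r$ that contains a zero $p$ of $F$ contains a neighborhood of $p$, hence has nonempty interior; so throughout we deal only with such ``fat'' components. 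The proof then has two parts: (i) a doubling estimate, $N:=\sup_\rho\log\big(\max_{B_{2\rho}}|F|/\max_{B_\rho}|F|\big)\le C(a)L$ over the range of radii $\rho$ relevant to the fat components; and (ii) a bound on the number of fat components of a sublevel set in terms of $N$.

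Part (i) is the classical input: by Jensen's inequality and the sub-mean-value property for the subharmonic function $|F|$, the growth of $\rho\mapsto\max_{B_\rho}|F|$ is controlled, and since the smallest fat component has inradius bounded below in terms of $\delta/\mu(F,R)$, only $O(L)$ dyadic scales are relevant, each contributing a doubling of $O(L)$; the radius enlargement from $r$ to $R$ (the constants $a$, $C_1$) is exactly the room required to run this and to absorb the components abutting $\partial B_r$.

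Part (ii) is the crux. The claim to establish is: the number of connected components with nonempty interior of a sublevel set $\{u\le c\}$ meeting $B_r$, where $u$ is subharmonic on $B_R\subset\R^n$ with doubling index $\le N$, is at most $C(n,a)N^{n}$; and, if $u$ is moreover plurisubharmonic on $\C^d=\R^{2d}$, the bound improves to $C(d,a)N^{2d-1}$. Taking $u=|h|$, $n=d$ then yields \eqref{eq:coarse_harmonic} with $C_2=C(d,a)$, and $u=|f|$ on $\C^d$ yields \eqref{eq:coarse_holomorphic} with $C_0=C(d,a)$. For the subharmonic bound I would use a multi-scale covering argument: at each dyadic scale $\rho$, a component of $\{u\le c\}$ that is ``created'' at scale $\rho$ occupies a ball of radius $\sim\rho$ on which, by the doubling bound, $u$ oscillates by at most $O(N)$, and a packing/Harnack argument caps the number of such balls by $O(N^{n-1})$ per scale; summing over the $O(N)$ scales gives $O(N^{n})$. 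For the plurisubharmonic improvement, one slices $\C^d$ by complex lines — through a generic point the family is $\mathbb{CP}^{d-1}$, of real dimension $2d-2$ — and uses that on each complex line $|f|$ restricts to the modulus of a one-variable holomorphic map, whose sublevel sets have only $O(N)$ components by Jensen's formula in one variable; an integral-geometric averaging (with the dyadic-scale bookkeeping above to stay uniform over component sizes) then replaces one factor of $N$ coming from a full real dimension by this cheaper factor, yielding $N^{2d-1}$ instead of $N^{2d}$.

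The main obstacle is precisely part (ii): the multi-scale packing argument must be made uniform over components of wildly varying sizes, and, in the holomorphic case, the integral-geometric passage from ``number of sublevel components on a slice'' to ``number of fat components of $\{|f|\le\delta\}$ in $B_r$'' must be carried out carefully — distinct fat components can merge along a given line, and controlling this (using that a line meeting a fat component in an interior sublevel arc must meet $f^{-1}(0)$, a set generic lines avoid) is the delicate step. The remaining technicalities — non-isolated zeros, components meeting $\partial B_r$, effectiveness of the doubling estimate — are routine and are exactly what forces the enlarged balls $B_{ar}$, $B_{C_1ar}$ on the right-hand sides.
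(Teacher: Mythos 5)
This statement is not proven in the paper at all: it is quoted from \cite{BPPSS23}, and the paper explicitly remarks that a complete proof of \eqref{eq:coarse_holomorphic} is given there while a detailed proof of \eqref{eq:coarse_harmonic} is deferred to \cite{LMH25}. So there is no in-paper argument to compare yours against; your text has to stand on its own as a proof of the cited theorem, and it does not yet do so.

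The genuine gap is your part (ii), which you yourself flag as the crux: the assertion that a sublevel set of a subharmonic function of doubling index $N$ has at most $C N^{n}$ fat components, with an improvement to $N^{2d-1}$ in the plurisubharmonic case, is essentially the content of the theorem, and no actual argument is supplied for it. The specific steps you lean on do not hold as stated: Harnack-type inequalities are not available for the subharmonic function $|F|$; a fat component is not known to ``occupy a ball of radius $\sim\rho$'' at the scale at which it is created --- a component containing a zero is only guaranteed to contain a ball of radius of order $\delta$ divided by the Lipschitz constant of $F$ on $B_R$, and its inradius can be tiny compared with its diameter, so the per-scale packing bound $O(N^{n-1})$ has no justification; and in part (i) the doubling index of $|F|$ on balls centred at or near zeros is not bounded by $L$ for small radii, since $\max_{B_\rho}|F|$ can lie far below $\delta$ there, so even the doubling input needs a more careful formulation (this is where one normally passes to $\log|f|$ and three-sphere/Hadamard-type inequalities in the holomorphic case, and where the harmonic case requires genuinely different tools --- which is presumably why its detailed proof occupies a separate thesis). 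Finally, for the exponent $2d-1$ you acknowledge but do not resolve the merging problem in the integral-geometric step. The slicing-by-complex-lines idea combined with one-variable Jensen is indeed in the spirit of how such bounds are obtained, but as written this is a strategy outline with the hardest lemma left unproved, not a proof.
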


Note that the right-hand sides of (\ref{eq:coarse_holomorphic}) and (\ref{eq:coarse_harmonic}) blow up as $\delta \to 0.$ In other words, Theorem \ref{Theorem_coarse_counts} does not provide a bound for the genuine count of zeros. This is expected since Cornalba-Shiffman examples and Corollary \ref{Coro:Holomorphic} show that such a bound cannot exist for holomorphic maps in general, while Theorem \ref{Thm:Main} demonstrates the same thing in the case of harmonic maps.

\begin{rem}
In \cite{BPPSS23}, a complete proof of (\ref{eq:coarse_holomorphic}) was given, while a proof of (\ref{eq:coarse_harmonic}) was outlined along the same lines. A detailed proof of (\ref{eq:coarse_harmonic}) will appear in \cite{LMH25}.
\end{rem}

\begin{rem}
The sharp contrast between classical and coarse counting has been observed in  a number of recent works, addressing seemingly unrelated questions. We already mentioned the upper bounds on the coarse counts of zeros of holomorphic and harmonic maps, which are contrasted with Cornalba-Shiffman examples, Corollary \ref{Coro:Holomorphic} and Theorem \ref{Thm:Main}. On the other hand,  an upper bound on the coarse count of nodal domains of linear combinations of eigenfunctions of elliptic operators has been proven in \cite{BPPPSS22}. The fact that such a bound does not exist for the genuine count was previously shown in \cite{BLS20}. Lastly, it was shown in \cite{CGG21} that the coarse count of periodic orbits of a Hamiltonian diffeomorphism of a monotone symplectic manifold grows at most exponentially with iterations. The non-existence of such a bound for the genuine count is a well-studied phenomenon in dynamics, see \cite{Asaoka17} and references therein.
\end{rem}

\section*{Acknowledgments}

I cordially thank Sasha Logunov for very inspiring and useful discussions, as well as for a number of comments on the preliminary versions of the draft. His help in carrying out this project has been truly indispensable. I thank Lev Buhovsky for valuable remarks. I was supported by CRM-ISM postdoctoral fellowship and Fondation Courtois.

\section{Proof of Theorem \ref{Thm:Main}}

\subsection{Sketch of the proof}

Let $\{c_k \geq 2\}_{k\geq 1}$ be a sequence of integers and 
$$u_k(x,y)=\sum_{j=1}^{2c_k-1}b_{k,j}\sin \left(\frac{\pi}{2^k} jx \right) e^{\frac{\pi}{2^k}jy},$$
where coefficients $b_{k,j}$ will be specified later. One readily checks that $u_k$ are harmonic and we define
$$g(x,y)=\sum_{k=1}^\infty a_k u_k.$$
Firstly, we claim that by taking $a_k>0$ which decreases sufficiently fast with $k$, we can obtain that the above power series converges uniformly on compact sets, implying that $g$ is well-defined and harmonic. Furthermore, we can guarantee that $\mu(g,r)=O(e^{r^{1+\varepsilon}}).$ These considerations are formalized in Proposition \ref{prop:modulus}.

Now, we define the desired harmonic map $h:\R^2\to \R^2$ as
$$h(x,y)=(g(x,y), \sin (\pi x) e^{\pi y}).$$
We claim that $\zeta(h,r)$ can grow arbitrarily fast if $\{c_k\}_{k\geq 1}$ grows sufficiently fast. To this end, notice that $\sin (\pi x) e^{\pi y}=0$ if and only if $x\in \Z,$ thus
$$h(x,y)=0 \Leftrightarrow g(x,y)=0 \text{ and } x\in \Z.$$
Hence, we need to count zeros of $g$ on vertical lines $x\in \Z.$ We start by analyzing zeros of $u_k$ for a fixed $k\geq 1.$

First, we notice that $u_k=0$ on lines $\{x=2^l\}$, $l\geq k$ since $\sin \left(\frac{\pi}{2^k} j 2^l \right) =0.$ On the other hand, for $x=2^{k-1}$, we have that
$$u_k(2^{k-1},y) = \sum_{j=1}^{2c_k-1}b_{k,j}\sin \left(\frac{\pi}{2} j \right) (e^{\frac{\pi}{2^k}y})^j=P(e^{\frac{\pi}{2^k}y}),$$
where $P$ is a polynomial of degree $2c_k-1.$ By suitably choosing $b_{k,j}$, we can arrange that this polynomial has $c_k-1$ distinct zeros in $(0,1)$, which implies that $u_k(2^{k-1},y)$ has $c_k-1$ zeros with $y\in [-2^{k-1},0].$ These considerations are formalized in Lemma \ref{Lem:Zeros_u}. The zero set of $u_k$ is sketched on Figure \ref{Figure:Zeros}.

\begin{figure}[ht]
	\begin{center}
		\includegraphics[scale=0.77]{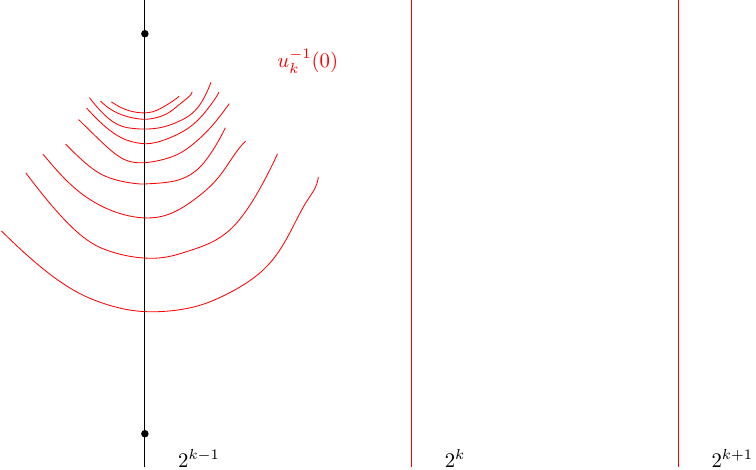}
		\caption{Zero set of $u_k.$}
		\label{Figure:Zeros}
	\end{center}
\end{figure}

The last step of the proof is the most delicate. It consists of inductively choosing $a_k$ in such a way that Figure \ref{Figure:Zeros} is transplanted on each vertical line $\{ x=2^k\}_{k\geq 1}.$ More precisely, we first fix $a_1>0$ and notice that $g_1=a_1 u_1$ has $c_1-1$ zeros on $\{x=1\},$ while $g_1=0$ on $\{x=2\},\{x=2^2 \},\{x=2^3\},...$ Now, we choose $a_2$ sufficiently small so that by adding $a_2u_2$ to $g_1$, we preserve the $c_1-1$ zeros on $\{x=1\},$ while introducing $c_2-1$ zeros on $\{x=2\}.$ In other words, $g_2=a_1 u_1+ a_2 u_2$ has $c_1-1$ zeros on $\{x=1\},$ $c_2-1$ zeros on $\{x=2\}$, and $g_2=0$ on $\{x=2^2\},\{x=2^3 \},...$ We proceed inductively to define $g$ which has exactly $c_k-1$ zeros on each $\{x=2^{k-1} \}, k\geq 1.$ The induction is detailed in Proposition \ref{prop:zeros}.

\subsection{A detailed proof}
For an integer $c\geq 2$ let 
$$P_c(x)=x\left( x^2-1+\frac{1}{2} \right)\left(x^2-1+\frac{1}{3} \right)\cdot \ldots \cdot \left(x^2-1+\frac{1}{c}\right).$$
Given a sequence of integers $\{c_k \geq 2\}_{k\geq 1}$, we define $\{b_{k,j}\}_{k\geq 1, 1\leq j \leq 2c_k -1}$ by the following identities
$$\sum_{j=1}^{2c_k-1} (-1)^{\frac{j-1}{2}} b_{k,j} x^j = x\left( x^2-1+\frac{1}{2} \right)\left(x^2-1+\frac{1}{3} \right)\cdot \ldots \cdot \left(x^2-1+\frac{1}{c_k}\right)=P_{c_k}(x).$$
Note that if $j$ is even the coefficient of $x^j$ in $P_c(x)$ is zero. Thus, $b_{k,j}=0$ for all even $j$ and the expression $(-1)^{\frac{j-1}{2}}$ is well-defined for odd $j.$ As above, for $k\geq 1$, let $u_k:\R^2 \to \R$ be given by
$$u_k(x,y)=\sum_{j=1}^{2c_k-1}b_{k,j}\sin \left(\frac{\pi}{2^k} jx \right) e^{\frac{\pi}{2^k}jy}.$$
One readily checks that $u_k$ are harmonic. For a sequence of positive real numbers $\{a_k\}_{k\geq 1}$, let
$$g=\sum_{k=1}^\infty a_k u_k.$$
The following proposition implies that for sufficiently small $\{a_k\}_{k\geq 1}$, $g$ is well-defined,  harmonic and satisfies the desired estimate on the modulus.

\begin{prop}\label{prop:modulus}
Let $\varepsilon>0.$ There exist positive numbers $A_1,A_2,\ldots$ which depend on $\{c_k \}_{k\geq 1}$ and $\varepsilon$, such that whenever $a_k\leq A_k$ for all $k\geq 1$, $g_N=\sum_{k=1}^N a_k u_k$ converges uniformly on compact sets and the function $g=\lim_{N\to \infty} g_N$ satisfies
\begin{equation}\label{eq:Max_Estimate_g}
\mu(g,r)\leq e^{r^{1+\varepsilon}},
\end{equation}
for all $r\geq 0.$
\end{prop}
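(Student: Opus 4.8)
The plan is to bound each $u_k$ on the ball $B_r$ and then choose the $A_k$ small enough that the tail of the series $\sum_k a_k u_k$ is dominated by a rapidly convergent geometric-type series, so that the partial sums converge uniformly on compacta and the limit obeys $\mu(g,r)\le e^{r^{1+\varepsilon}}$. First I would estimate $\mu(u_k,r)$. On $B_r$ we have $|x|,|y|\le r$, so
$$|u_k(x,y)|\le \sum_{j=1}^{2c_k-1}|b_{k,j}|\,\Bigl|\sin\bigl(\tfrac{\pi}{2^k}jx\bigr)\Bigr|\,e^{\frac{\pi}{2^k}jy}\le \sum_{j=1}^{2c_k-1}|b_{k,j}|\,e^{\frac{\pi}{2^k}jr}\le B_k\,e^{\pi c_k r},$$
where $B_k=\sum_{j=1}^{2c_k-1}|b_{k,j}|$ is a finite constant depending only on $c_k$ (here I used $\frac{\pi}{2^k}j\le \frac{\pi}{2^k}(2c_k-1)\le \pi c_k$; one can of course be sharper, but this suffices). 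Thus $\mu(u_k,r)\le B_k e^{\pi c_k r}=:M_k(r)$, a function that is completely explicit once $\{c_k\}$ is fixed.

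Next I would set up the convergence and the modulus bound simultaneously. Define $A_k$ by an inductive or closed-form recipe forcing $a_k M_k(r)$ to be summable against a budget. Concretely, it is convenient to aim for
$$\mu(g,r)\le \sum_{k=1}^\infty a_k\,\mu(u_k,r)\le \sum_{k=1}^\infty a_k B_k e^{\pi c_k r}\le e^{r^{1+\varepsilon}}\qquad\text{for all }r\ge 0.$$
One clean way: for each $k$ pick $A_k>0$ so small that $A_k B_k e^{\pi c_k r}\le 2^{-k}e^{r^{1+\varepsilon}}$ holds for all $r\ge 0$. This is possible because the function $r\mapsto e^{r^{1+\varepsilon}}/(B_k e^{\pi c_k r})$ is continuous, strictly positive on $[0,\infty)$, and tends to $+\infty$ as $r\to\infty$ (since $r^{1+\varepsilon}$ eventually dominates the linear term $\pi c_k r$); hence it attains a positive minimum $m_k$ on $[0,\infty)$, and we may take $A_k=2^{-k}m_k$. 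Then whenever $a_k\le A_k$ for all $k$, the series $\sum_k a_k u_k$ is, on any $B_r$, dominated termwise by $\sum_k 2^{-k}e^{r^{1+\varepsilon}}=e^{r^{1+\varepsilon}}$; by the Weierstrass $M$-test the partial sums $g_N$ converge uniformly on $B_r$, hence uniformly on compact sets, so $g=\lim_N g_N$ is well defined, and passing to the limit in the termwise bound gives $\mu(g,r)\le e^{r^{1+\varepsilon}}$ for every $r\ge 0$, which is exactly \eqref{eq:Max_Estimate_g}.

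It remains to note that $g$ is harmonic: each $g_N$ is a finite sum of the harmonic functions $u_k$, hence harmonic, and a locally uniform limit of harmonic functions is harmonic (e.g. by the mean value property, or by interior elliptic estimates). I do not expect any of these steps to be a genuine obstacle; the only point requiring a little care is the uniform-in-$r$ choice of $A_k$, i.e. checking that the relevant ratio is bounded below on the whole half-line $[0,\infty)$ rather than just on bounded intervals — this is where the exponent $1+\varepsilon>1$ is used, since it guarantees superlinear growth of $r^{1+\varepsilon}$ and hence a positive infimum for each fixed $k$. Everything else is routine, and no properties of the specific coefficients $b_{k,j}$ beyond their finiteness are needed here; their arithmetic nature will only matter later, in Lemma \ref{Lem:Zeros_u} and Proposition \ref{prop:zeros}.
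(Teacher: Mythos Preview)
Your proposal is correct and follows essentially the same approach as the paper: bound $\mu(u_k,r)$ by a constant (depending on $k$) times an exponential of a linear function of $r$, then use that $r^{1+\varepsilon}$ eventually dominates any linear term to ensure the ratio $e^{r^{1+\varepsilon}}/M_k(r)$ has a positive infimum on $[0,\infty)$, and finally choose $A_k$ to get a $2^{-k}$ budget in the Weierstrass $M$-test. The only cosmetic difference is that the paper computes the constants a bit more explicitly (bounding $|b_{k,j}|\le 2^{c_k-1}$ and $\tfrac{\pi}{2^k}j\le \tfrac{\pi}{2}j$), whereas you simply package them into $B_k$ and the exponent $\pi c_k$; neither gains anything over the other.
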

\begin{proof}
We will show that there exist $A_1,A_2, \ldots$ such that if $a_k\leq A_k$ for all $k\geq 1$ then for all $r\geq 0$
$$\sum_{k=1}^\infty a_k \mu(u_k,r) \leq e^{r^{1+\varepsilon}}.$$
This will imply that for all $r\geq 0$ the series $\sum_{k=1}^\infty a_k u_k|_{B_r}$ absolutely converges in $\| \cdot \|_{L^\infty}$ (and thus $g_N$ converges uniformly on compact sets), as well as that (\ref{eq:Max_Estimate_g}) holds. To this end, we first  notice that from the definition of $b_{k,j}$ it follows that $|b_{k,j}|\leq \binom{c_k-1}{(j-1)/2} \leq 2^{c_k-1}.$ Using this inequality we estimate
$$\sum_{k=1}^\infty a_k \mu( u_k,r )\leq \sum_{k=1}^\infty a_k \sum_{j=1}^{2c_k-1} |b_{k,j}| e^{\frac{\pi}{2^k}jr}\leq \sum_{k=1}^\infty a_k 2^{c_k-1} \sum_{j=1}^{2c_k-1} e^{\frac{\pi}{2^k}jr}\leq $$
$$\leq\sum_{k=1}^\infty a_k 2^{c_k-1} \sum_{j=1}^{2c_k-1} e^{\frac{\pi}{2}jr} =\sum_{k=1}^\infty a_k 2^{c_k-1} \sum_{j=1}^{2c_k-1} (e^{\frac{\pi}{2}r})^j\leq \sum_{k=1}^\infty a_k 2^{c_k-1} (e^{\frac{\pi}{2}r}+1)^{2c_k-1} \leq$$
$$\leq \sum_{k=1}^\infty a_k 2^{c_k-1} (2e^{\frac{\pi}{2}r})^{2c_k-1}=\sum_{k=1}^\infty a_k 2^{3c_k-2} (e^{\frac{\pi}{2}r})^{2c_k-1}.$$
Define a sequence $\{ d_i \}_{i\geq 1}$ by setting $d_{2c_k-1}=a_k2^{3c_k-2}, k\geq 1$ and $d_i=0$ when $i\notin \{2c_1-1,2c_2-1,\ldots  \}.$ Previous estimates give us that
\begin{equation}\label{eq:power_series_estimate}
\sum_{k=1}^\infty a_k \mu(u_k,r) \leq \sum_{k=1}^\infty d_k e^{\frac{\pi}{2}kr}.
\end{equation}
Since $c_k$ are fixed, by taking $a_k$ small enough we can make $d_k$ arbitrarily small. On the other hand, as $e^{r^{1+\varepsilon}}$ grows faster than $e^{\frac{\pi}{2}kr}$ for any $k$, there exist $A_1,A_2,\ldots$ such that if $a_k\leq A_k$ then
$$d_k e^{\frac{\pi}{2}kr} \leq \frac{1}{2^k} e^{r^{1+\varepsilon}},$$ 
for all $r\geq 0$. Combining these inequalities with (\ref{eq:power_series_estimate}) finishes the proof.
\end{proof}

\begin{rem}
In fact, the proof of Proposition \ref{prop:modulus} shows that for any $F:(0,+\infty)\to (0,+\infty)$ such that $\limsup_{r\to +\infty} \frac{e^{kr}}{F(r)}<+\infty$ for all $k\geq 1$, there exist $A_1,A_2,\ldots$ such that if $a_k\leq A_k$ for all $k\geq 1$ then $\mu(g,r)\leq F(r)$ for all $r\geq 0.$
\end{rem}

Next, we analyze the zeros of $g.$ The required properties are summarized in the following proposition.

\begin{prop}\label{prop:zeros}
Let $\{c_k \geq 2 \}_{k\geq 1}$ be a sequence of integers and $\{A_i \}_{i\geq 1}$ a sequence of positive real numbers. There exists a sequence $\{ a_i \}_{\geq 1}, 0<a_i\leq A_i$, such that for every $k\geq1$, $g$ defined by $\{ a_i \}$ has exactly $c_k-1$ zeros inside the interval $\{2^{k-1} \} \times  (-2^{k-1} ,0).$ Moreover, at each of these zeros $\frac{\partial g}{\partial y}\neq 0.$
\end{prop}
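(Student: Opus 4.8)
The plan is to construct the sequence $\{a_i\}$ inductively, adding one term $a_k u_k$ at a time and controlling the effect on the finitely many vertical lines $\{x=2^{l-1}\}$, $l\leq k$, that have already been "populated" with zeros. The two facts that drive the induction are: (i) for $l>k$ we have $u_k \equiv 0$ on the line $\{x=2^{l-1}\}$, since $\sin(\frac{\pi}{2^k} j \, 2^{l-1}) = \sin(\pi j 2^{l-1-k}) = 0$, so adding $a_k u_k$ does not disturb any line that will be populated later; and (ii) on the line $\{x=2^{k-1}\}$ we have $u_k(2^{k-1},y) = P_{c_k}(e^{\frac{\pi}{2^k}y})$ up to the factor $\sin(\frac{\pi}{2}j)=(-1)^{(j-1)/2}$ built into the definition of $b_{k,j}$, and $P_{c_k}(t) = t\prod_{m=2}^{c_k}(t^2-1+\frac1m)$ has exactly $c_k-1$ roots in $(0,1)$, namely $t=\sqrt{1-\frac1m}$ for $m=2,\dots,c_k$, all simple. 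Pulling back by $t=e^{\frac{\pi}{2^k}y}$, these become $c_k-1$ simple zeros of $y\mapsto u_k(2^{k-1},y)$ lying in $(-2^{k-1},0)$; one checks $\sqrt{1-\frac1m}\in(0,1)$ maps to $y=\frac{2^k}{\pi}\log\sqrt{1-\frac1m}\in(-2^{k-1},0)$ for all $2\le m\le c_k$, which is the reason the interval $(-2^{k-1},0)$ is the right one. (This is presumably the content of the Lemma \ref{Lem:Zeros_u} referred to in the sketch.)

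Now run the induction. Suppose $a_1,\dots,a_{k-1}$ have been chosen so that $g_{k-1}=\sum_{i<k} a_i u_i$ has, on each line $\{x=2^{l-1}\}$ for $l\le k-1$, exactly $c_l-1$ zeros in $(-2^{l-1},0)$, each with $\partial_y g_{k-1}\neq 0$, and so that $g_{k-1}\equiv 0$ on $\{x=2^{l-1}\}$ for all $l\ge k$. Restricting to the line $\{x=2^{k-1}\}$: there $g_{k-1}\equiv 0$, so $g_{k-1}+a_k u_k$ restricted to that line equals $a_k u_k(2^{k-1},\cdot)$, which has exactly $c_k-1$ zeros in $(-2^{k-1},0)$, all with nonvanishing $y$-derivative, \emph{for every} $a_k>0$. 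Restricting to the lines $\{x=2^{l-1}\}$ with $l\ge k+1$: by fact (i), $u_k$ vanishes identically there, so $g_{k-1}+a_k u_k\equiv 0$ on them, as required for the next step. The only real work is the lines $\{x=2^{l-1}\}$, $l\le k-1$: here we invoke a quantitative stability argument. On the compact set $\bigcup_{l\le k-1}\{2^{l-1}\}\times[-2^{l-1},0]$, the function $g_{k-1}$ and its $y$-derivative are fixed; at each of its $\sum_{l\le k-1}(c_l-1)$ zeros $\partial_y g_{k-1}\neq 0$, so there is $\rho>0$ and $m>0$ with $|\partial_y g_{k-1}|\ge m$ on the $\rho$-neighbourhoods (within the lines) of these zeros and $|g_{k-1}|\ge$ some positive constant outside those neighbourhoods. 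Since $u_k$ and $\partial_y u_k$ are bounded on this compact set, choosing $a_k>0$ small enough makes $a_k\|u_k\|$ and $a_k\|\partial_y u_k\|$ so small that (a) $g_{k-1}+a_ku_k$ still exceeds $0$ in absolute value outside the neighbourhoods, and (b) inside each neighbourhood $\partial_y(g_{k-1}+a_ku_k)$ still has modulus $\ge m/2\neq 0$, hence is monotone there, while the sign change of $g_{k-1}$ across the neighbourhood is preserved; so each old zero persists as a unique simple zero and no new ones appear on these lines. Finally also require $a_k\le A_k$. This yields $g_k=g_{k-1}+a_ku_k$ with the inductive properties at level $k$.

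To pass to the limit $g=\sum_{k\ge1}a_ku_k$: by Proposition \ref{prop:modulus}, after possibly shrinking $A_k$ we may assume the series converges uniformly on compact sets, so $g$ is well-defined and harmonic, and moreover on any fixed line $\{x=2^{l-1}\}$ and fixed compact $y$-interval, $g$ and $\partial_y g$ are the uniform limits of $g_k$ and $\partial_y g_k$ (uniform convergence of harmonic functions gives uniform convergence of all derivatives on slightly smaller sets). But for $k\ge l$ we have $g_k = g_{l}$ on the line $\{x=2^{l-1}\}$? — not quite: for $k>l$, $u_k\equiv 0$ on $\{x=2^{l-1}\}$, so indeed $g_k\big|_{\{x=2^{l-1}\}} = g_l\big|_{\{x=2^{l-1}\}}$ for all $k\ge l$, hence $g\big|_{\{x=2^{l-1}\}} = g_l\big|_{\{x=2^{l-1}\}}$ and likewise for $\partial_y g$. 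Therefore $g$ has on $\{2^{l-1}\}\times(-2^{l-1},0)$ exactly $c_l-1$ zeros, each with $\partial_y g\neq 0$, for every $l\ge1$, which is the assertion (with $k$ in place of $l$).

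\textbf{Main obstacle.} The delicate point is step two: arranging that adding $a_ku_k$ \emph{preserves} the previously created zeros on the lower lines without creating or destroying any, and keeps $\partial_y g\neq0$ there. This is handled by the quantitative perturbation estimate above — separating a lower bound on $|\partial_y g_{k-1}|$ near the old zeros from a lower bound on $|g_{k-1}|$ away from them, then choosing $a_k$ small relative to the $C^1$-norm of $u_k$ on the relevant compact set. Everything else (the root count of $P_{c_k}$, the vanishing of $u_k$ on dyadic lines, the passage to the limit) is either elementary or already supplied by Proposition \ref{prop:modulus}.
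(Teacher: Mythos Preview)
Your inductive construction of the $a_k$ follows the paper's scheme, but the passage to the limit contains a genuine error. In your final paragraph you assert that ``for $k>l$, $u_k\equiv 0$ on $\{x=2^{l-1}\}$'' and conclude $g|_{\{x=2^{l-1}\}}=g_l|_{\{x=2^{l-1}\}}$. This is false, and in fact contradicts your own inductive step: you correctly identified earlier that the ``real work'' on the lines $\{x=2^{l-1}\}$ with $l\le k-1$ arises precisely because $u_k$ does \emph{not} vanish there. Lemma~\ref{Lem:Zeros_u} says $u_k\equiv 0$ on $\{x=2^m\}$ for $m\ge k$, i.e.\ on the \emph{higher} dyadic lines, not the lower ones; for instance $u_2(1,y)$ involves $\sin(\pi j/4)$, which is nonzero for $j=1$. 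So the tail $\sum_{i>l}a_iu_i$ is not zero on $\{x=2^{l-1}\}$, and as written your argument gives no control of $g$ there: knowing that each $g_k$ has the right zeros does not by itself force the limit $g$ to.

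The fix, which is what the paper does, is to build control of the infinite tail into the inductive choice rather than to hope it vanishes. At step $k$ one records an explicit threshold $m_k>0$ such that any $C^1$-perturbation of size $<m_k$ on $B_{2^k}$ preserves the $c_k-1$ simple zeros of $g_k$ on $\{2^{k-1}\}\times(-2^{k-1},0)$; then for every later index $i>k$ one imposes
\[
a_i\le \frac{\min(m_1,\dots,m_{i-1})}{2^i\,\|u_i\|_{C^1(B_{2^i})}}.
\]
Summing the geometric series gives $\|\sum_{i>k}a_iu_i\|_{C^1(B_{2^k})}<m_k$, so $g=g_k+\text{(perturbation below threshold)}$ has the required zeros on the $k$-th line. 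Your step-by-step stability argument is exactly the right idea for choosing the thresholds, but without this quantitative bookkeeping it does not pass to the limit.
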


In order to prove Proposition \ref{prop:zeros}, we will need an auxiliary lemma about the zero sets of $u_k.$ Denote by $\xi_{k,j}=(2^{k-1}, \frac{2^{k-1}}{\pi} \log (1-\frac{1}{j})),$ for $k\geq 1$ and $2\leq j \leq c_k.$

\begin{lem}\label{Lem:Zeros_u}
For each $k\geq 1$, $2\leq j \leq c_k$ it holds $u_k(\xi_{k,j})=0$, $\frac{\partial u_k}{\partial y} (\xi_{k,j})\neq 0$ and $u_k$ has no other zeros on the line $\{x=2^{k-1} \}.$ Moreover, for any integer $l\geq k$, $u_k=0$ on the whole line $\{x=2^l \}.$
\end{lem}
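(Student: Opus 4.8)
The plan is to analyze the restriction $u_k(2^{k-1},y)$ and the restriction $u_k(2^l,y)$ separately. First I would substitute $x=2^l$ for an integer $l\geq k$: then $\sin\left(\frac{\pi}{2^k}j\,2^l\right)=\sin\left(\pi j\,2^{l-k}\right)=0$ since $j\,2^{l-k}$ is an integer (here $l-k\geq 0$), so every term vanishes and hence $u_k\equiv 0$ on $\{x=2^l\}$. This disposes of the last sentence of the lemma immediately.

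For the main part, I would restrict to the line $\{x=2^{k-1}\}$ and compute
\[
u_k(2^{k-1},y)=\sum_{j=1}^{2c_k-1}b_{k,j}\sin\!\left(\frac{\pi}{2}j\right)e^{\frac{\pi}{2^k}jy}.
\]
Since $b_{k,j}=0$ for even $j$, only odd $j$ survive, and for odd $j$ we have $\sin\left(\frac{\pi}{2}j\right)=(-1)^{(j-1)/2}$. Hence, setting $t=e^{\frac{\pi}{2^k}y}>0$, we get $u_k(2^{k-1},y)=\sum_{j \text{ odd}}(-1)^{(j-1)/2}b_{k,j}t^j=P_{c_k}(t)$ by the defining identity of the $b_{k,j}$. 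So zeros of $u_k$ on the line $\{x=2^{k-1}\}$ correspond, via the strictly increasing bijection $y\mapsto t=e^{\frac{\pi}{2^k}y}$ from $\R$ onto $(0,+\infty)$, to positive roots of $P_{c_k}$. Now $P_{c_k}(t)=t\prod_{m=2}^{c_k}\left(t^2-1+\frac1m\right)$, whose roots are $t=0$ and $t=\pm\sqrt{1-\frac1m}$ for $m=2,\dots,c_k$; the positive ones are exactly $t_m=\sqrt{1-\frac1m}\in(0,1)$ for $m=2,\dots,c_k$, and these are $c_k-1$ distinct simple roots. Unwinding, $y_m=\frac{2^k}{\pi}\log t_m=\frac{2^{k-1}}{\pi}\log\!\left(1-\frac1m\right)$, which is precisely the second coordinate of $\xi_{k,m}$; so $u_k(\xi_{k,j})=0$ for $2\leq j\leq c_k$ and there are no other zeros on the line. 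Simplicity of the roots $t_m$ of $P_{c_k}$ (each factor $t^2-1+\frac1m$ contributes a simple root and the roots $0,t_2,\dots,t_{c_k}$ are pairwise distinct because $1-\frac1m$ are distinct and nonzero) gives $P_{c_k}'(t_m)\neq 0$; since $\frac{\partial}{\partial y}u_k(2^{k-1},y)=P_{c_k}'(t)\cdot\frac{\pi}{2^k}t$ and $t=t_m>0$, we conclude $\frac{\partial u_k}{\partial y}(\xi_{k,j})\neq 0$.

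There is essentially no hard obstacle here; the lemma is a direct computation. The only points requiring a little care are (i) correctly identifying $\sin(\frac{\pi}{2}j)=(-1)^{(j-1)/2}$ for odd $j$ so that the sum literally becomes $P_{c_k}$ evaluated at $t=e^{\frac{\pi}{2^k}y}$, (ii) checking that the $c_k-1$ positive roots $\sqrt{1-\frac1m}$ are distinct and simple and all lie in $(0,1)$, hence correspond to genuine (and all of the) zeros on the line through the monotone change of variables, and (iii) keeping track of the chain rule so that the non-vanishing of $P_{c_k}'$ at these roots transfers to non-vanishing of $\partial_y u_k$ at the points $\xi_{k,j}$. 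I would present these in the order: the $\{x=2^l\}$ statement first (one line), then the substitution and identification with $P_{c_k}$, then the root analysis, then the derivative computation.
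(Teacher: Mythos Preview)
Your proposal is correct and follows essentially the same argument as the paper: substitute $x=2^{k-1}$, recognize $u_k(2^{k-1},y)=P_{c_k}(e^{\frac{\pi}{2^k}y})$, read off the zeros from the positive roots of $P_{c_k}$, and use simplicity of these roots for $\partial_y u_k\neq 0$; the $\{x=2^l\}$ part is the same one-line observation. The only differences are cosmetic (you make the chain rule explicit and treat the ``moreover'' clause first).
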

\begin{proof}
By setting $x=2^{k-1}$ we obtain
$$u_k(2^{k-1},y)=\sum_{j=1}^{2c_k-1} b_{k,j} \sin \left( \frac{\pi}{2}j \right)e^{\frac{\pi}{2^k}jy}=\sum_{j=1}^{2c_k-1} b_{k,j} (-1)^{\frac{j-1}{2}}e^{\frac{\pi}{2^k}jy}=P_{c_k}(e^{\frac{\pi}{2^k}y}),$$
as $b_{k,j}=0$ when $j$ is even. Since the zeros of $P_{c_k}$ are $\{0, \pm\sqrt{1-1/j}\}_{2\leq j \leq c_k}$ and $e^{\frac{\pi}{2^k}y}>0$, we get that $\xi_{k,j}$ are the only zeros of $u_k$ on the line $\{x=2^{k-1} \}$.  $\frac{\partial u_k}{\partial y}(\xi_{k,j})\neq 0$ follows from the fact that $P_{c_k}$ has  $2c_k-1=\deg P_{c_k}$ distinct real zeros. In order to prove the moreover part it is enough to notice that for $x=2^l$, $\sin (\frac{\pi}{2^k}jx)=\sin(2^{l-k} \pi j ) =0$ for all $j.$
\end{proof}

\begin{proof}[Proof of Proposition \ref{prop:zeros}]

For $D\subset \R^2$ and a smooth function $F:\R^2\to \R$, we denote
$$\| F \|_{C^1(D)}= \max_{D} \Big\{ |F|, \Big | \frac{\partial F}{\partial x} \Big|,  \Big | \frac{\partial F}{\partial y} \Big| \Big\}.$$

Without loss of generality, we may assume that $\{A_i \}_{i\geq 1}$ is such that if for all $i\geq 1$, $0<a_i \leq A_i$ then $g$ is well-defined, see Proposition \ref{prop:modulus}. We will define the sequence $\{a_i \}_{i\geq 1}$ inductively.  In the $k$-th step, apart from $a_k$, we will also define a perturbation threshold $m_k$ which will be used to further define $a_i$, for $i\geq k+1.$

First, we choose $0<a_1 \leq A_1$ arbitrarily. By Lemma \ref{Lem:Zeros_u}, $g_1=a_1 u_1$ has exactly $c_1-1$ zeros in $\{1 \} \times [-1,0]$, which all belong to $\{1 \} \times (-1,0)$, and $\frac{\partial g_1}{\partial y}=a_1 \frac{\partial u_1}{\partial y}\neq 0$ at each of these zeros. Thus, there exists $m_1>0$ such that for every $p:\R^2\to \R$ which satisfies $\|p \|_{C^1(B_2)}<m_1$, the perturbation $g_1+p$ still has exactly $c_1-1$ zeros in $\{1 \} \times [-1,0],$ which all belong to $\{1 \} \times (-1,0)$ and at each of them $\frac{\partial (g_1+p)}{\partial y}\neq 0$. This is the desired threshold $m_1.$

Notice that $m_1$ has been chosen so that perturbations smaller than $m_1$ do not destroy the $c_1-1$ zeros we created by picking $a_1.$ More generally, assume that we picked $a_1,\ldots , a_k>0$ for some integer $k\geq 2.$ Then $g_k=\sum_{i=1}^k a_i u_i$ has exactly $c_k-1$ zeros in $\{ 2^{k-1} \} \times [-2^{k-1},0]$, which all belong to $\{ 2^{k-1} \} \times (-2^{k-1},0)$, and at each of them $\frac{\partial g_k}{\partial y}\neq 0.$ Indeed, by Lemma \ref{Lem:Zeros_u}, $u_1=u_2=\ldots = u_{k-1}=0$ on $\{ x= 2^{k-1} \}.$ Thus, on  $\{ x= 2^{k-1} \}$ we have that
$$g_k=a_k u_k ,~ \frac{\partial g_k}{\partial y}=a_k \frac{\partial u_k}{\partial y},$$
and the claim follows by applying Lemma \ref{Lem:Zeros_u} again. We will pick $m_k$ so that these zeros are not destroyed by perturbations bellow $m_k.$ More precisely, we inductively define $\{a_i \},\{m_i\}$ so that for all $i\geq 1$ the following conditions are satisfied\footnote{For $i=1$, 1) is just $0<a_1\leq A_1.$}

\begin{itemize}
\item[1)] $0<a_i\leq \min \left(A_i , \frac{\min(m_1,\ldots ,m_{i-1} )}{2^i \cdot \|u_i \|_{C^1(B_{2^i})}} \right)$ ; 
\item[2)] $m_i>0$ and for every $p:\R^2\to \R$ which satisfies $\|p \|_{C^1(B_{2^i})}<m_i$, the perturbation $g_i+p$ still has exactly $c_i-1$ zeros in $\{ 2^{i-1} \} \times [-2^{i-1},0]$, which all belong to $\{ 2^{i-1} \} \times (-2^{i-1},0)$, and at each of them $\frac{\partial (g_i+p)}{\partial y}\neq 0$.
\end{itemize}

Assume that $a_1,\ldots , a_k, m_1,\ldots,m_k$ satisfying 1) and 2) have been defined. We choose $a_{k+1}$ to be an arbitrary number which satisfies condition 1), i.e.
$$0<a_{k+1}\leq \min \left(A_{k+1} , \frac{\min(m_1,\ldots ,m_{k} )}{2^{k+1} \cdot \|u_{k+1} \|_{C^1(B_{2^{k+1}})}} \right).$$
Since $a_{k+1}>0$, the above considerations show that $m_{k+1}$ which satisfies 2) exists. This completes the inductive definition of $\{a_i\}$ and $\{m_i\}.$

To finish the proof of the proposition,  we show that $g=\sum_{i=1}^\infty a_i u_i$ has the required properties.  Firstly, by condition 1), for all $i\geq 1$, $a_i\leq A_i$ and thus $g$ is well-defined. Now, fix an arbitrary integer $k\geq 1.$ By 1), we have that for all $i\geq k+1$
$$a_i \leq  \frac{\min(m_1,\ldots ,m_{i-1} )}{2^i \cdot \|u_i \|_{C^1(B_{2^i})}}  \leq \frac{m_k}{2^i \cdot \|u_i \|_{C^1(B_{2^k})}} $$
and thus
$$\Big\Vert\sum_{i=k+1}^\infty a_i u_i \Big\Vert_{C^1(B_{2^k})} \leq \sum_{i=k+1}^\infty a_i\| u_i \|_{C^1(B_{2^k})} \leq \sum_{i=k+1}^\infty \frac{m_k}{2^i} <m_k.$$
By property 2) of $m_k$, $g=g_k+ \sum_{i=k+1}^\infty a_i u_i $ has exactly $c_k-1$ zeros in $\{ 2^{k-1} \} \times (-2^{k-1},0)$ and at each of them $\frac{\partial g}{\partial y}\neq 0.$ This finishes the proof.
\end{proof}

\begin{proof}[Proof of Theorem \ref{Thm:Main}]
Let $c_k=n_k+1$, $k\geq 1$ and let $A_1,A_2,\ldots$ be given by Proposition \ref{prop:modulus}. For $k\geq 1$, let $\{a_k\}, 0<a_k\leq A_k$, be a sequence given by Proposition \ref{prop:zeros}. Proposition \ref{prop:modulus}, implies that $g=\sum_{k=1}^\infty a_k u_k$ is well-defined, harmonic and $\mu(g,r)\leq e^{r^{1+\varepsilon}}.$ We define
$$h(x,y)= (g(x,y), \sin (\pi x) e^{\pi y}).$$
It follows that
$$\mu(h,r)=\max_{|(x,y)|\leq r} \sqrt{g(x,y)^2+ (\sin (\pi x) e^{\pi y})^2}\leq \sqrt{e^{r^{1+\varepsilon}}+e^{2\pi r}}=O(e^{r^{1+\varepsilon}}),  $$
which proves the first part of the theorem. On the other hand, $\sin (\pi x) e^{\pi y}=0$ if and only if $x\in \Z$ and hence zeros of $h$ coincide with zeros of $g$ on the lines $\{ x\in \Z \}.$ By Proposition \ref{prop:zeros}, for each $k\geq 1$, there are exactly $n_k=c_k-1$ of them inside $\{2^{k-1}\} \times (-2^{k-1},0).$ We conclude that $h$ has at least $\sum_{i=1}^k n_i$ isolated zeros inside $B_{2^k}$, which finishes the proof.
\end{proof}

\begin{rem}\label{rem:regular_zeros}
The condition $\frac{\partial g}{\partial y}\neq 0$ in Proposition \ref{prop:zeros}, guarantees that the zeros of $h$ which we constructed are not critical points of $h.$ To see this, notice that the matrix of derivatives of $h$ is
$$dh= \begin{pmatrix} \frac{\partial g}{\partial x} & \frac{\partial (\sin (\pi x) e^{\pi y}) }{\partial x} \\ \frac{\partial g}{\partial y} & \frac{\partial (\sin (\pi x) e^{\pi y})}{\partial y} \end{pmatrix}=\begin{pmatrix} \frac{\partial g}{\partial x} & -\pi\cos (\pi x) e^{\pi y} \\ \frac{\partial g}{\partial y} & \pi\sin (\pi x) e^{\pi y} \end{pmatrix},$$
and that $ \sin (\pi x) e^{\pi y}=0$, $\cos (\pi x) e^{\pi y}\neq0$ when $x\in \Z.$
\end{rem}

\section{Proof of Corollary \ref{Coro:Holomorphic}}
We will need the following classical result.
\begin{thrm}\label{Holomorhpic_Extension}
Let $h:\R^2 \to \R^2$ be a harmonic map. There exists a constant $C>0$ and a holomorphic map $f:\C^2\to \C^2$, such that $f|_{\R^2}=h$ and for all $r\geq 0$
\begin{equation}\label{Modulus_Increase}
\mu(f,r) \leq C \mu (h,2r).
\end{equation}
\end{thrm}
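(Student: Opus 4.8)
The plan is to prove Theorem \ref{Holomorhpic_Extension} by explicitly constructing the holomorphic extension from the harmonic components. First I would recall that a harmonic function $u:\R^2\to\R$ is the real part of a holomorphic function $\phi$ on $\C$; writing $z=x+iy$, the function $\phi$ is determined up to an additive imaginary constant by $\phi(z)=u(z/2 + \bar z/2, \cdot)$ — more precisely, since $u$ is real-analytic, it has a power series expansion about the origin that converges on all of $\R^2$, and substituting $x\mapsto z$, $y\mapsto w$ with complex $z,w$ gives a holomorphic function $U(z,w)$ on $\C^2$ with $U|_{\R^2}=u$. Applying this to each component $h_1,h_2$ of $h$ yields a holomorphic $f=(H_1,H_2):\C^2\to\C^2$ with $f|_{\R^2}=h$. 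The content of the theorem is then the modulus bound \eqref{Modulus_Increase}.

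The key step is therefore to control $|H_j(z,w)|$ for $(z,w)\in\C^2$ with $|(z,w)|\le r$ in terms of $\max_{B_{2r}}|h_j|$. Here I would invoke the standard Bernstein-type estimate for harmonic functions: a harmonic function on a ball of radius $R$ bounded by $M$ there extends holomorphically to a polydisc (or complex ball) of radius $cR$ with a bound of the form $CM$. Concretely, write $h_j$ on $B_{2r}$ via the Poisson integral on the circle of radius $2r$; the Poisson kernel $\frac{(2r)^2-|x|^2}{|x-\zeta|^2}$ extends to a holomorphic function of the complexified spatial variable as long as the complex point stays well inside, and for $|(z,w)|\le r$ (so at distance $\ge r$ from the boundary circle of radius $2r$) the complexified kernel is bounded by an absolute constant, giving $|H_j(z,w)|\le C\max_{\partial B_{2r}}|h_j|\le C\mu(h,2r)$. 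Taking $C$ uniform and combining the two components $H_1,H_2$ (adjusting $C$ by a factor $\sqrt2$ absorbed into the constant) yields $\mu(f,r)\le C\mu(h,2r)$.

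The main obstacle is making the complexification of the Poisson representation rigorous and uniform in $r$: one must check that the complexified Poisson kernel is genuinely holomorphic in the complex spatial variable on the relevant region and that its modulus is bounded by a constant depending only on the ratio of radii ($2r$ versus $r$), not on $r$ itself. This is where the factor $2$ in \eqref{Modulus_Increase} enters — it is precisely the gap between the radius $r$ on which we want holomorphy and the radius $2r$ on which $h$ is assumed bounded, ensuring the denominator $|x-\zeta|^2$ of the kernel stays bounded away from zero after complexification. Once this uniform kernel bound is in hand, the rest is a routine estimate. Alternatively, and perhaps more cleanly, one can cite this as a classical fact (it is, e.g., a consequence of interior estimates for harmonic/analytic functions, or of the theory of analytic continuation of solutions to elliptic PDE with analytic coefficients), since the theorem is labeled ``classical''; but I would still sketch the Poisson-kernel argument for completeness.
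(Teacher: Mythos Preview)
The paper does not actually prove this theorem: it simply states that the result is classical and cites \cite[Lemma 3]{Hayman70}. Your proposal is therefore strictly more detailed than what the paper does, and your suggestion at the end --- that one could just cite the result --- is exactly what the author chose.

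Your Poisson-kernel sketch is essentially correct and is one standard route to such analytic-extension estimates. The complexified kernel factors as
\[
(z-a)^2+(w-b)^2=\bigl((z+iw)-(a+ib)\bigr)\bigl((z-iw)-(a-ib)\bigr),
\]
with $a=2r\cos\theta$, $b=2r\sin\theta$; since $|a\pm ib|=2r$ while $|z\pm iw|\le\sqrt{2}\,r$ on the complex ball of radius $r$, the denominator is bounded below by $(2-\sqrt{2})^2r^2$ and the numerator above by $5r^2$, giving the uniform constant you want. Uniqueness of analytic continuation ensures the extensions obtained from different radii $2r$ patch together to a single entire $H_j$ on $\C^2$. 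One small expository point: your opening sentence about $u$ being the real part of a holomorphic function on $\C$ is the harmonic-conjugate construction, which is a different (one-variable) complexification and not what you need here; you immediately correct course to the genuine two-variable complexification $U(z,w)$, so this is only a cosmetic wrinkle, but you should drop that false start in a final write-up.
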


The proof of Theorem \ref{Holomorhpic_Extension} follows, for example, from \cite[Lemma 3]{Hayman70}.

\begin{proof}[Proof of Corollary \ref{Coro:Holomorphic}]
Let $h:\R^2 \to \R^2$ be the map given by Theorem \ref{Thm:Main} such that $\mu(h,r)=O(e^{r^{1+\varepsilon/2}}).$ Let $f:\C^2 \to \C^2$ be its holomorphic extension given by Theorem \ref{Holomorhpic_Extension}. By (\ref{Modulus_Increase}) we have that
$$\mu(f,r)\leq C \mu (h,2r) = O(e^{(2r)^{1+\varepsilon/2}})=O(e^{r^{1+\varepsilon}}).$$
On the other hand, since $f|_{\R^2}=h$, all zeros of $h$ are also zeros of $f.$ We wish to show that the isolated zeros given by Theorem \ref{Thm:Main} remain isolated as zeros of $f$ in $\C^2.$ None of these zeros are critical points of $h$, see Remark \ref{rem:regular_zeros}.  The complex derivatives of $f$ at these zeros coincide with the real derivatives of $h.$ Indeed, since $f$ is holomorphic, the complex and real derivatives of $f$ coincide and we use $f|_{\R^2}=h.$ Hence, all the zeros remain regular points of $f$ and thus isolated in $\C^2.$
\end{proof}


\begin{thebibliography}{abbrv}

\bibitem{Asaoka17} M. Asaoka,  \emph{Abundance of fast growth of the number of periodic points in 2-dimensional area-preserving dynamics}. Comm. Math. Phys. 356 (2017), no.1, 1-17.

\bibitem{BY86}C. A. Berenstein and A. Yger,  \emph{Ideals generated by exponential-polynomials}. Adv. in Math.60 (1986), no.1, 1-80.

\bibitem{BLS20} L. Buhovsky, A. Logunov and M. Sodin, \emph{Eigenfunctions with infinitely many isolated critical points}. Int. Math. Res. Not. IMRN 2020, no. 24, 10100-10113.

\bibitem{BPPPSS22} L. Buhovsky, J. Payette, I. Polterovich, L. Polterovich, E. Shelukhin and V. Stojisavljevi\'c, \emph{Coarse nodal count and topological persistence}. Journal of the European Mathematical Society, published online at \url{https://doi.org/10.4171/jems/1521}.

\bibitem{BPPSS23} L. Buhovsky, I. Polterovich, L. Polterovich, E. Shelukhin and V. Stojisavljevi\'c, \emph{Persistent transcendental B\'{e}zout theorems}.  Forum of Mathematics, Sigma, Volume 12, 2024, e72.

\bibitem{Carlson76} J. A. Carlson, \emph{A moving lemma for the transcendental Bezout problem}.  Ann. of Math. (2) 103 (1976), no.2, 305-330.

\bibitem{CGG21} E. \c{C}ineli, V. L. Ginzburg and B. Z. G\"{u}rel, \emph{Topological entropy of Hamiltonian diffeomorphisms: a persistence homology and Floer theory perspective}. arXiv:2111.03983.

\bibitem{CornalbaShiffman} M. Cornalba and B. Shiffman, \emph{A counterexample to the ``transcendental B\'{e}zout problem''}. Ann. of Math. (2) 96 (1972), 402-406.


\bibitem{Griffiths71} P. A. Griffiths, \emph{Two theorems on extensions of holomorphic mappings}. Invent. Math. 14 (1971), 27-62. 


\bibitem{Gruman77} L. Gruman, \emph{The area of analytic varieties in $\C^n$}. Math. Scand.41 (1977), no.2, 365-397.

\bibitem{Hayman70} W. K. Hayman, \emph{Power series expansions for harmonic functions}. Bull. London Math. Soc. 2 (1970), 152-158.

\bibitem{Ji95} S. Ji, \emph{B\'ezout estimate for entire holomorphic maps and their Jacobians}. Amer. J. Math. 117 (1995), no.2, 395-403.

\bibitem{LMH25} M. Le Manh Ho, M.Sc. thesis, Université de Montréal, to appear.

\bibitem{Li98} B. Q. Li, \emph{On the B\'ezout problem and area of interpolating varieties in $\C^n$, II}. Amer. J. Math.120 (1998), no.6, 1191-1198.

\bibitem{LT96} B. Q. Li and B. A. Taylor, \emph{On the B\'ezout problem and area of interpolating varieties in $\C^n$}. Amer. J. Math.118 (1996), no.5, 989-1010.


\bibitem{SerreGAGA} J. - P. Serre, \emph{G\'eom\'etrie analytique et g\'eom\'etrie alg\'ebrique}. Ann. Inst. Fourier, VI (1955–56), pages 1-42, 1955.

\bibitem{Stoll72} W. Stoll, \emph{A Bezout estimate for complete intersections}. Ann. of Math. (2) 96 (1972), 361-401.


\end{thebibliography}
\end{document}